\numberwithin{equation}{section}
\newtheorem{theorem}[equation]{Theorem}
\newtheorem{lemma}[equation]{Lemma}
\newtheorem{proposition}[equation]{Proposition}
\theoremstyle{definition}
\newtheorem{example}[equation]{Example}
\newcommand{\defeq}{\stackrel{\mathrm{def}}{=}}
\title[Enumeration of colored Dyck paths via partial Bell polynomials]{Enumeration of colored Dyck paths via \\ partial Bell polynomials}
\author{Daniel Birmajer}
\address{Department of Mathematics\\ Nazareth College\\ 4245 East Ave.\\ Rochester, NY 14618}
\author{Juan B. Gil}
\address{Penn State Altoona\\ 3000 Ivyside Park\\ Altoona, PA 16601}
\author{Peter R.W. McNamara}
\address{Department of Mathematics\\ Bucknell University\\ 1 Dent Drive\\ Lewisburg, PA 17837}
\author{Michael D. Weiner}
\address{Penn State Altoona\\ 3000 Ivyside Park\\ Altoona, PA 16601}
\begin{document}
\maketitle

\begin{abstract}
We consider a class of lattice paths with certain restrictions on their ascents and down steps and use them as building blocks to construct various families of Dyck paths. We let every building block $P_j$ take on $c_j$ colors and count all of the resulting colored Dyck paths of a given semilength. Our approach is to prove a recurrence relation of convolution type, which yields a representation in terms of partial Bell polynomials that simplifies the handling of different colorings. This allows us to recover multiple known formulas for Dyck paths and related lattice paths in an unified manner.
\end{abstract}

%%%%%%%%%%%%%%%%%%%%%%%%%%%%%%%%%%%%%%%%%%%%
\section{Introduction}
\label{sec:intro}

A {\em Dyck path} of semilength $n$ is a lattice path in the first quadrant, which begins at the origin $(0,0)$, ends at $(2n,0)$, and consists of steps $(1,1)$ and $(1,-1)$. It is customary to encode an up-step $(1,1)$ with the letter $u$ and a down-step $(1,-1)$ with the letter $d$. Thus every Dyck path can be encoded by a corresponding {\em Dyck word} of $u$'s and $d$'s. We will freely pass from paths to words and vice versa. 

Much is known about Dyck paths and their connection to other combinatorial structures like rooted trees, noncrossing partitions, polygon dissections, Young tableaux, as well as other lattice paths. While there is a vast literature on the enumeration of Dyck paths and related combinatorial objects according to various statistics, for the scope of the present work, we only refer to the closely related papers \cite{AM08, Deutsch99, MS08}. For more information, the reader is referred to the general overview on lattice path enumeration written by C.~Krattenthaler in \cite[Chapter~10]{Handbook}.

\smallskip
For $a, b\in\mathbb{N}_0=\mathbb{N}\cup\{0\}$ with $a+b\not=0$ and $\mathbf{c}=(c_1,c_2,\dots)$ with $c_j\in\mathbb{N}_0$, we define
\begin{quote}
$\mathfrak{D}^{\mathbf{c}}_n(a,b)$ as the set of Dyck words of semilength $(a+b)n$ created from strings of the form $P_0=``d\,"$ and $P_j=``u^{(a+b)j}d^{b(j-1)+1}"$ for $j=1,\ldots,n$, such that each maximal $(a+b)j$-ascent substring $u^{(a+b)j}$ may be colored in $c_j$ different ways. We use $c_j=0$ if $(a+b)j$-ascents are to be avoided.  We will refer to the elements of $\mathfrak{D}^{\mathbf{c}}_n(a,b)$ as \emph{colored Dyck paths}.
\end{quote}
\smallskip

Note that if $a=1$, $b=0$, and $\mathbf{c}$ is the sequence of ones $\mathbf{c}=\mathbbm{1}=(1,1,\dots)$, then the building blocks take the form $P_0=``d\,"$, $P_j=``u^j d\,"$ for $j=1,\dots,n$, and $\mathfrak{D}^{\mathbbm{1}}_n(1,0)$ is just the set of regular Dyck words of semilength $n$.

In this paper, we are interested in counting the number of elements in $\mathfrak{D}^{\mathbf{c}}_n(a,b)$. For the sequence given by $y_n=\left|\mathfrak{D}^{\mathbf{c}}_n(a,b)\right|$, we prove a recurrence relation of convolution type (see Theorem~\ref{thm:yn_recurrence}) and give a representation of $y_n$ in terms of partial Bell polynomials in the elements of the sequence $\mathbf{c}=(c_1,c_2,\dots)$ (see Theorem~\ref{thm:Bell}). 

We conclude with several examples that illustrate the use of our formulas for various values of the parameters $a$ and $b$ as well as some interesting coloring choices.

%%%%%%%%%%%%%%%%%%%%%%%%%%%%%%%%%%%%%%%%%%%%
\section{Enumeration of colored Dyck words}
\label{sec:enumeration}

Our technique for enumerating $\mathfrak{D}^{\mathbf{c}}_n(a,b)$ will be to show in Theorem~\ref{thm:yn_recurrence} and Proposition~\ref{thm:zn_recurrence} that the sequence $y_n=\left|\mathfrak{D}^{\mathbf{c}}_n(a,b)\right|$ satisfies the same initial condition and recurrence relation as a sequence $(z_n)$ involving Bell polynomials.  As a direct consequence, we get the promised enumeration of $\mathfrak{D}^{\mathbf{c}}_n(a,b)$ in terms of partial Bell polynomials (Theorem~\ref{thm:Bell}).

\begin{theorem} \label{thm:yn_recurrence}
For $a, b\in\mathbb{N}_0$ with $a+b\not=0$ and $\mathbf{c}=(c_1,c_2,\dots)$ with $c_j\in\mathbb{N}_0$, let $(y_n)$ be the sequence defined by $y_0=1$ and $y_n=\left|\mathfrak{D}^{\mathbf{c}}_n(a,b)\right|$ for $n\ge 1$. Then $y_n$ satisfies the recurrence
\begin{equation}\label{eq:recurrence}
 y_n = \sum_{\ell=1}^n c_{\ell} \sum_{i_1+\cdots + i_{a\ell+b} = n-\ell} y_{i_1} \cdots y_{i_{a\ell+b}},
\end{equation}
where each $i_j$ is a nonnegative integer.
\end{theorem}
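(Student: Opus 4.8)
The plan is to prove \eqref{eq:recurrence} bijectively, by decomposing a nonempty colored Dyck path according to its initial maximal ascent. Fix $w\in\mathfrak{D}^{\mathbf{c}}_n(a,b)$ with $n\ge 1$. Since $w$ is a nonempty Dyck word it begins with $u$, so the first of the blocks making up $w$ cannot be $P_0=``d\,"$ and must be some $P_\ell=``u^{(a+b)\ell}d^{b(\ell-1)+1}"$; moreover, because the leading block's ascent must coincide with the full initial $u$-run of $w$, the index $\ell$ is forced, with $1\le\ell\le n$ since $(a+b)\ell\le(a+b)n$. Thus $w$ starts with the ascent $u^{(a+b)\ell}$, colorable in $c_\ell$ ways, immediately followed by the $b(\ell-1)+1$ down-steps completing the block $P_\ell$ (maximality of the initial ascent is automatic, since $b(\ell-1)+1\ge 1$). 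After this leading block the path sits at height $(a+b)\ell-(b(\ell-1)+1)=a\ell+b-1$, and the remainder $w'$ is a concatenation of the allowed blocks that descends from height $a\ell+b-1$ to $0$ inside the first quadrant.

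Next I would analyze $w'$ by recording where it first reaches each of the levels $a\ell+b-2,\,a\ell+b-3,\dots,1,0$. The key point is that the only block with negative net height change is $P_0$, while every $P_j$ with $j\ge 1$ has net change $aj+b-1\ge 0$ and never drops below the level at which it begins; hence $w'$ can attain a new minimum only at the end of a $P_0$-block. Since $w'$ must descend all the way from $a\ell+b-1$ to $0$, exactly $a\ell+b-1$ occurrences of $P_0$ serve as these ``new-minimum'' steps, and cutting $w'$ right after each of them writes $w'$ uniquely as $D_1\, d\, D_2\, d\cdots d\, D_{a\ell+b}$, where each $D_m$ is a concatenation of blocks that returns to (and never goes below) its own starting level, i.e. $D_m\in\mathfrak{D}^{\mathbf{c}}_{i_m}(a,b)$ for some $i_m\ge 0$; the colorings of $w$ are precisely the $c_\ell$ choices for the initial ascent together with independent colorings of $D_1,\dots,D_{a\ell+b}$. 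Counting up-steps on both sides of this decomposition gives $(a+b)\ell+(a+b)(i_1+\cdots+i_{a\ell+b})=(a+b)n$, hence $i_1+\cdots+i_{a\ell+b}=n-\ell$.

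Conversely, given $\ell$ with $1\le\ell\le n$, a color for $u^{(a+b)\ell}$, and colored Dyck paths $D_1,\dots,D_{a\ell+b}$ with semilengths summing to $(a+b)(n-\ell)$, the word $u^{(a+b)\ell}\, d^{b(\ell-1)+1}\, D_1\, d\, D_2\, d\cdots d\, D_{a\ell+b}$ is built from the allowed blocks, stays in the first quadrant (it has height $a\ell+b-1-m\ge 0$ just after the $m$-th separating $d$, and the $D_m$'s have net change $0$), returns to $0$, has semilength $(a+b)n$, and has initial maximal ascent $u^{(a+b)\ell}$. This is the inverse of the previous map, so we have a bijection onto the nonempty elements of $\mathfrak{D}^{\mathbf{c}}_n(a,b)$. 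Summing over $\ell$ and over all compositions $i_1+\cdots+i_{a\ell+b}=n-\ell$ with nonnegative parts, and using $\left|\mathfrak{D}^{\mathbf{c}}_{i}(a,b)\right|=y_i$ together with $y_0=1$ for the empty path, yields exactly \eqref{eq:recurrence}.

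The main obstacle is the bookkeeping in the second paragraph: one must verify carefully that the level-based decomposition of $w'$ never splits a block $P_j$ with $j\ge 1$ and that precisely $a\ell+b-1$ copies of $P_0$ act as separators. This requires attention to the degenerate situations — notably $a=0$ (which forces $b\ge 1$) and small $\ell$ — where $a\ell+b-1$ can equal $0$, so that $w'$ is a single colored Dyck path with no separating down-steps, and to the fact that a block $P_j$ with $aj+b-1=0$ ends at the same level at which it began, a case in which one must check that no spurious new minimum is created in the interior of that block.
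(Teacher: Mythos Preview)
Your proof is correct and follows essentially the same bijective strategy as the paper's own proof: both read off the initial maximal ascent $u^{(a+b)\ell}$ with its color, strip the leading block $P_\ell$, and then split the remainder into $a\ell+b$ colored Dyck subwords separated by $a\ell+b-1$ single down-steps. Your phrasing in terms of ``first reaching each new level'' is exactly the paper's ``excess drops by one'' decomposition, and your observation that only a $P_0$-block can produce a new minimum is the block-level analogue of the paper's letter-by-letter argument that each $D_m$ is itself a colored Dyck word.
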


\begin{proof}
We will prove \eqref{eq:recurrence} by showing that there is a bijection between the sets of objects counted by each side of the equation. The left-hand side counts colored Dyck words of semilength $(a+b)n$. The right-hand side counts tuples of the form
\[  (\ell, C; D_1, D_2, \ldots, D_{a\ell+b}),  \]
where 
\begin{itemize}
\item[$\circ$] $1\leq \ell \leq n$, 
\item[$\circ$] $C$ is a color from a choice of $c_{\ell}$ colors, 
\item[$\circ$] $D_j$ is a colored Dyck word of semilength $(a+b)i_j$, and 
\item[$\circ$] $i_1 + \cdots + i_{a\ell+b} = n-\ell$.  
\end{itemize}
From this tuple, we will construct a colored Dyck word $w$ of semilength $(a+b)n$ in the following fashion.

Due to the $\ell$ and $C$ appearing at the start of the tuple, we begin with $w = u^{(a+b)\ell}d^{b(\ell-1)+1}$ and color the substring $u^{(a+b)\ell}$ with the color $C$. We then append $D_1$, $D_2$, \ldots, $D_{a\ell+b}$ to $w$, separating each adjacent pair $(D_i, D_{i+1})$ by an additional copy of the letter $d$. We need to check that this map is well-defined, meaning that $w$ is a colored Dyck word of semilength $(a+b)n$.

Let us first check that $w$ contains equal numbers of the letters $u$ and $d$. Since the $D_i$ already satisfy this condition, we need 
\[ (a+b)\ell = (b(\ell-1)+1) + (a\ell+b-1), \]
which is true. Similar reasoning shows the ``Dyck'' property, i.e., that any prefix of $w$ has at least as many appearances of $u$ as of $d$. To determine the semilength of $w$, we count the number of appearances of $u$ as
\[ (a+b)\ell + (a+b)(n-\ell) = (a+b)n, \]
as desired. By construction, each maximal $(a+b)j$-ascent has an appropriate color and we conclude that $w$ is a colored Dyck word of semilength $(a+b)n$.  
 
To show that this map $f$ from the tuple to $w$ is a bijection, we argue that it has a well-defined inverse $g$.  Thus let $w$ be a colored Dyck path of semilength $(a+b)n$, and recall that $a$ and $b$ are fixed. The length $L$ and color of the ascent sequence at the beginning of $w$ determines $\ell=\frac{L}{a+b}$ and $C$ at the start of the tuple $g(w)$.  Let $w^1$ denote the word obtained from $w$ by removing this prefix $u^{(a+b)\ell}d^{b(\ell-1)+1}$ from $w$.  See Figure~\ref{fig:yn_recurrence_proof} for a schematic example.  We next wish to determine $D_1, \ldots, D_{a\ell+b}$ from $w^1$.  Let us say that $w^1$ has \emph{excess} $a\ell+b-1$, meaning that it has this many more copies of $d$ than of $u$.  Notice that this excess is nonnegative.

\begin{figure}[htbp]
\begin{center}
\begin{tikzpicture}[scale=0.45]
\draw [step=1,thin,gray!40] (0,0) grid (23,6);
\draw [very thick] (0,0) -- (5,5) -- (6,4);
\draw [very thick] (8,4) -- (9,3);
\draw [very thick] (13,3) -- (15,1);
\draw [very thick] (17,1) -- (18,0);
\draw [dashed] (6,4) -- (8,4);
\draw [dashed] (9,3) -- (13,3);
\draw [dashed] (15,1) -- (17,1);
\draw [very thick] (8,4) arc (0:180:1);
\draw [very thick] (13,3) arc (0:180:2);
\draw [very thick] (17,1) arc (0:180:1);
\draw [very  thick] (22,0) arc (0:180:2);
\draw [->] (0,0) -- (0,6);
\draw [->] (0,0) -- (23,0);
\draw (7,4.45) node {$D_1$};
\draw (11,4) node {$D_2$};
\draw (14.5,2.3) node {$D_3$};
\draw (16,1.45) node {$D_4$};
\draw (20,1) node {$D_5$};
\draw [<->, thick, blue] (6,2.5) -- node[above] {$r$} (9,2.5);
\draw [thick, dashed,red]  (6,-1.5) -- (6,6);
\draw [thick, dashed,red]  (9,-0.8) -- (9,6);
\draw [thick, red, ->] (6,-1.3) -- (7,-1.3);
\draw [thick, red, ->] (9,-0.6) -- (10,-0.6);
\draw [red] (6.6,-1.9) node {$w^1$};
\draw [red] (9.6,-1.2) node {$w^2$};
\end{tikzpicture}
\caption{A schematic example of determining $(\ell, C; D_1, D_2, \ldots, D_{a\ell+b})$ from $w$ as in the proof of Theorem~\ref{thm:yn_recurrence}, where the semicircles represent colored Dyck paths.  We have $L=5$, $a=5$, $b=0$, $\ell=1$, and $D_3$ is an empty word. For $i=1,2$, we see that $w^i$ is the portion of $w$ to the right of the corresponding dashed line.}
\label{fig:yn_recurrence_proof}
\end{center}
\end{figure}
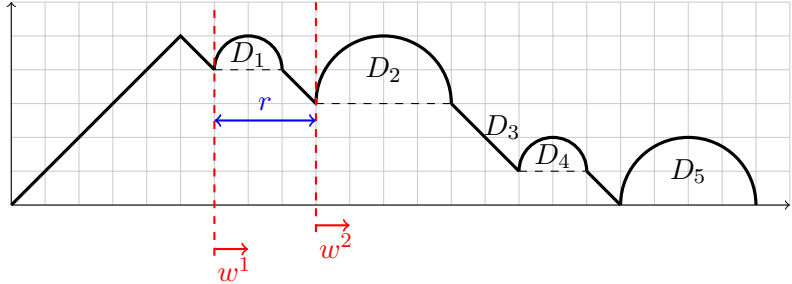

In short, we proceed by finding the smallest $r$ such that the suffix of $w^1$ corresponding to those letters strictly after position $r$ has excess one less than that of $w^1$.  Then we let $w^2$ be that suffix of $w$ and we let $D_1$ be the prefix of $w^1$ corresponding to those letters strictly before position $r$. In more detail, if the first letter of $w^1$ is $d$ then we get that $D_1$ is the empty word, and we let $w^2$ be the word obtained from $w^1$ by deleting this 1-letter prefix. Note that any extra copies of $d$, including the type just mentioned, correspond to the letters we used to separate adjacent pairs $(D_i, D_{i+1})$ in the forward map $f$. If the first letter of $w^1$ is $u$, then $D_1$ will be nonempty. We read off $D_1$ by following $w^1$ until we first reach a position $r$ where the number of appearances of $d$ in these first $r$ letters exceeds the number of appearances of $u$. If no such position $r$ exists, then it must be the case that $w^1$ has excess 0 and we let $D_1=w^1$.  Otherwise, we let $w^2$ be the word obtained from $w^1$ by deleting the first $r$ letters of $w^1$ and we let $D_1$ be the word (with colors) corresponding to the first $r-1$ letters of $w^1$. 

By the definition of $r$, $D_1$ has equal numbers of $u$'s and $d$'s, and it satisfies the Dyck property. Moreover, every maximal $(a+b)j$-ascent sequence is immediately followed by a $(b(j-1)+1)$-descent sequence because $w$ has this property and because these ascent lengths $(a+b)j$ are at least as large as their partnering descent lengths $(b(j-1)+1)$.  In other words, $D_1$ is a colored Dyck word. We continue in this exact manner to determine the full sequences $w^3, \ldots, w^{s}$ and $D_1, \ldots, D_s$ for some $s$.  Since each $w^i$ has excess one less than $w^{i-1}$, we deduce that $s-1 = a\ell+b-1$, and so $s=a\ell+b$, as desired.

Notice that the resulting tuple $g(w) = (\ell, C; D_1, D_2, \ldots, D_{a\ell+b})$ satisfies the properties in the four bullet points given at the beginning of this proof.  In particular, since $w$ has semilength $(a+b)n$, the total number of $u$'s in $D_1, D_2, \ldots, D_{a\ell+b}$ equals $(a+b)n - \ell(a+b)$, and so $i_1 + \cdots + i_{a\ell+b} = n-\ell$. We conclude that $g$ maps colored Dyck words of semilength $(a+b)n$ to tuples of the desired type.  Finally, one can readily observe that $g\circ f$ and $f \circ g$ both equal the identity map.
\end{proof}

%%%%%%%%%%%%%%%%%%%%%%%%%%%%%%%%%%%%%%%%%%%%
\section{Representation in terms of partial Bell polynomials}
\label{sec:Bell}

Our goal for this section is to use the result of Theorem~\ref{thm:yn_recurrence} to give a formula for $y_n = \left|\mathfrak{D}^{\mathbf{c}}_n(a,b)\right|$ in terms of partial Bell polynomials.

For $a, b\in\mathbb{R}$ (not both $=0$) and $\mathbf{c}=(c_1,c_2,\dots)$, consider the sequence $(z_n)$ defined by
\begin{equation} \label{eq:abSequence}
 z_0=1, \quad  z_n = \sum_{k=1}^n \binom{an+bk}{k-1}\frac{(k-1)!}{n!} B_{n,k}(1!c_1, 2!c_2, \dots)
 \;\text{ for } n\ge 1,
\end{equation}
where $B_{n,k}$ denotes the $(n,k)$-th partial Bell polynomial defined as
\begin{equation*}
  B_{n,k}(x_1,\dots,x_{n-k+1})=\sum_{\alpha\in\pi(n,k)} \frac{n!}{\alpha_1! \cdots \alpha_{n-k+1}!}\left(\frac{x_1}{1!}\right)^{\alpha_1}\cdots \left(\frac{x_{n-k+1}}{(n-k+1)!}\right)^{\alpha_{n-k+1}}
\end{equation*}
with $\pi(n,k)$ denoting the set of multi-indices $\alpha\in{\mathbb N}_0^{n-k+1}$ such that $\alpha_1+\cdots+\alpha_{n-k+1}=k$ and $\alpha_1+2\alpha_2+\cdots+(n-k+1)\alpha_{n-k+1}=n$.  For more information on partial Bell polynomials, see \cite[Section~3.3]{Comtet}.

The sequence \eqref{eq:abSequence} satisfies the following convolution formula:

\begin{lemma}[{cf. \cite[Theorem~2.1]{BGW14a}}] \label{lem:r-convolution-general}
For $r, n \geq 1$, we have
\begin{equation*}
  z_n^{(r)} \defeq \sum_{m_1+\dots+m_r=n} \!\! z_{m_1}\cdots z_{m_r} 
 = r\sum_{k=1}^{n}\binom{a n+b k+r-1}{k-1} \frac{(k-1)!}{n!} B_{n,k}(1!c_1, 2!c_2, \dots).
\end{equation*}
\end{lemma}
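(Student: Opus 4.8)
The plan is to prove Lemma~\ref{lem:r-convolution-general} by induction on $r$, using the base case $r=1$ (which is exactly the definition \eqref{eq:abSequence} of $z_n$) and then passing from $r$ to $r+1$ via the obvious recursion $z_n^{(r+1)} = \sum_{j=0}^{n} z_j^{(r)} z_{n-j}$, where $z_j^{(r)}$ is interpreted as $z_0^r = 1$ when $j=0$ and likewise $z_0^{(r)}=1$. Substituting the inductive formula for $z_j^{(r)}$ and the defining formula for $z_{n-j}$, the whole problem reduces to an identity among partial Bell polynomials and binomial coefficients. The key tool here is the convolution identity for partial Bell polynomials,
\begin{equation*}
  \sum_{j=0}^{n}\binom{n}{j} B_{j,k}(1!c_1,2!c_2,\dots)\,B_{n-j,\ell}(1!c_1,2!c_2,\dots) = \binom{k+\ell}{k} B_{n,k+\ell}(1!c_1,2!c_2,\dots),
\end{equation*}
which follows from the exponential-generating-function definition of $B_{n,k}$ as the coefficient extraction in $\frac{1}{k!}\bigl(\sum_{m\ge1} c_m t^m\bigr)^k$ (see \cite[Section~3.3]{Comtet}); with $x_m = m!c_m$ this is exactly the standard multiplicativity of Bell polynomials under products of the underlying power series.

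Concretely, I would write $z_n^{(r+1)} = \sum_{j=0}^n \binom{n}{j}^{-1}$-free form by clearing the $j!$ and $(n-j)!$ denominators: the factor $\frac{(k-1)!}{j!}B_{j,k}$ from $z_j^{(r)}$ and $\frac{(\ell-1)!}{(n-j)!}B_{n-j,\ell}$ from $z_{n-j}$ combine, after multiplying and dividing by $\binom{n}{j}$, into $\frac{(k-1)!(\ell-1)!}{n!}\binom{n}{j}B_{j,k}B_{n-j,\ell}$. Summing over $j$ and applying the Bell convolution collapses the inner sum to $\frac{(k-1)!(\ell-1)!}{n!}\binom{k+\ell}{k}B_{n,k+\ell}$. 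Reindexing with $p = k+\ell$, the remaining task is to show that
\begin{equation*}
  r\sum_{k+\ell=p,\,k,\ell\ge1}\binom{an+bk+r-1}{k-1}\binom{a\cdot 0 + b\ell}{\ell-1}\cdot(\text{combinatorial factors}) = (r+1)\binom{an+bp+r}{p-1},
\end{equation*}
together with careful bookkeeping of the boundary terms $j=0$ and $j=n$ (where $z_0^{(r)}=1$ contributes $z_n$, and $z_n^{(r)}$ times $z_0=1$ contributes the $r$-fold term). So the crux reduces to a Vandermonde-type convolution of binomial coefficients shifted by the parameters $a$, $b$, $r$.

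That binomial identity is where I expect the real work to be, and it is almost certainly the same computation carried out in \cite[Theorem~2.1]{BGW14a} (the ``cf.'' citation signals that the present statement generalizes that result by allowing the extra linear-in-$k$ term $bk$ in the upper binomial argument). The cleanest route is probably to recognize the sum $\sum_{k\ge1}\binom{an+bk+r-1}{k-1} x^{k-1}$ as a coefficient in an algebraic generating function of Lagrange-inversion type — indeed, sequences satisfying $z_0=1$ and the recurrence \eqref{eq:recurrence} from Theorem~\ref{thm:yn_recurrence} are exactly those whose generating function $Z(x) = \sum z_n x^n$ satisfies a functional equation of the form $Z = 1 + x\,\phi(Z)$-type with $\phi$ encoding the $c_\ell$ and the exponents $a\ell+b$, so Lagrange--Bürmann gives $z_n^{(r)}$ as a residue. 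Proving the lemma this way would mean: (i) set up the functional equation for $Z(x)$; (ii) apply the Lagrange inversion formula to $Z(x)^r$; (iii) expand the resulting residue using the multinomial theorem to recover the Bell polynomial $B_{n,k}(1!c_1,2!c_2,\dots)$ and the binomial coefficient $\binom{an+bk+r-1}{k-1}$. The main obstacle, whichever route is chosen, is managing the two independent indices (the Bell index $k$ and, in the inductive approach, the split $j$) and matching the shifted binomial coefficients exactly; once the Bell-polynomial convolution identity is invoked, everything else is careful but routine algebra.
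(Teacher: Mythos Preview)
The paper does not prove this lemma: it is stated with a ``cf.'' citation to \cite[Theorem~2.1]{BGW14a} and then used as a black box in the proof of Proposition~\ref{thm:zn_recurrence}. So there is no in-paper argument to compare against, only the external reference.

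On the merits of your plan, the inductive route has a real gap. When you expand $z_j^{(r)}z_{n-j}$ using the inductive hypothesis and the definition \eqref{eq:abSequence}, the binomial coefficients that arise are
\[
\binom{aj+bk+r-1}{k-1}\quad\text{and}\quad\binom{a(n-j)+b\ell}{\ell-1},
\]
both of which depend on the splitting variable $j$ through the terms $aj$ and $a(n-j)$. Your displayed ``remaining task'' instead carries $\binom{an+bk+r-1}{k-1}$ and $\binom{a\cdot 0+b\ell}{\ell-1}$, which is simply not what appears. Because of that $j$-dependence you cannot pull the binomials outside the $j$-sum and then invoke the Bell convolution $\sum_j\binom{n}{j}B_{j,k}B_{n-j,\ell}=\binom{k+\ell}{k}B_{n,k+\ell}$; the three sums over $j$, $k$, $\ell$ remain entangled. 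The decoupling you sketch only occurs when $a=0$; for general $a$ the induction as written does not close, and the residual identity you would actually have to prove is not a Vandermonde convolution in $k$ and $\ell$ alone.

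Your second suggestion, Lagrange--B\"urmann inversion applied to $[x^n]Z(x)^r$, is the viable route and is in the spirit of the cited reference. Writing $\frac{(k-1)!}{n!}B_{n,k}(1!c_1,2!c_2,\dots)=\frac{1}{k}[t^n]C(t)^k$ with $C(t)=\sum_{m\ge1}c_mt^m$, the claimed right-hand side reads $r\sum_{k\ge1}\frac{1}{k}\binom{an+bk+r-1}{k-1}[t^n]C(t)^k$, which is exactly the shape Lagrange inversion produces for powers of a series satisfying a functional equation of the type $Z=1+Z^bC(xZ^a)$. Here the parameter $a$ enters through the $n$-dependence built into Lagrange's formula rather than as a factor that must be separated from a $j$-sum, which is why this approach goes through where the naive induction does not.
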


\begin{proposition} \label{thm:zn_recurrence}
Suppose $a, b\in\mathbb{N}_0$. For  $n\ge 1$, the sequence $(z_n)$ defined by \eqref{eq:abSequence} satisfies the recurrence
\begin{equation} \label{eq:abRecurrence}
 z_n=\sum_{\ell=1}^n \; c_\ell \!\! \sum_{i_1+\dots+i_{a\ell+b}=n-\ell}z_{i_1}\dots z_{i_{a\ell+b}}
 = \sum_{\ell=1}^n c_\ell\, z_{n-\ell}^{(a\ell+b)},
\end{equation}
where each $i_j$ is a nonnegative integer and $z_0^{(an+b)}=1$.
\end{proposition}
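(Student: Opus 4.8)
The plan is to verify Proposition~\ref{thm:zn_recurrence} by a direct computation that reduces it to Lemma~\ref{lem:r-convolution-general}. The second equality in \eqref{eq:abRecurrence} is purely a matter of notation: by definition $z_{n-\ell}^{(a\ell+b)} = \sum_{i_1+\dots+i_{a\ell+b}=n-\ell} z_{i_1}\cdots z_{i_{a\ell+b}}$ whenever $a\ell+b\ge 1$, and the convention $z_0^{(an+b)}=1$ handles the edge case that arises when $\ell=n$ and $a\ell+b=0$ (which forces $an+b=0$, so $a=b=0$ is excluded, but the convention is stated for safety). So the real content is the first equality: that $z_n$ equals $\sum_{\ell=1}^n c_\ell\, z_{n-\ell}^{(a\ell+b)}$.

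First I would dispose of the case $n-\ell=0$, i.e. the term $\ell=n$: this contributes $c_n\, z_0^{(an+b)} = c_n$. For the remaining terms $1\le \ell\le n-1$, I would apply Lemma~\ref{lem:r-convolution-general} with $r = a\ell+b$ (noting $r\ge 1$ here since if $a\ell+b=0$ with $\ell\ge 1$ then $a=b=0$) and with $n$ replaced by $n-\ell$, obtaining
\begin{equation*}
 z_{n-\ell}^{(a\ell+b)} = (a\ell+b)\sum_{k=1}^{n-\ell}\binom{a(n-\ell)+bk+a\ell+b-1}{k-1}\frac{(k-1)!}{(n-\ell)!}\,B_{n-\ell,k}(1!c_1,2!c_2,\dots).
\end{equation*}
Then I would substitute into $\sum_{\ell=1}^n c_\ell\, z_{n-\ell}^{(a\ell+b)}$, reindex, and compare with the defining formula \eqref{eq:abSequence} for $z_n$. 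The binomial argument simplifies: $a(n-\ell)+bk+a\ell+b-1 = an+b(k+1)-1$, which is suggestive, since in $z_n$ we want $\binom{an+bk}{k-1}$. To make the indices match I would set $j = k+1$ (so the term $c_\ell$ with the $k$-fold Bell polynomial $B_{n-\ell,k}$ contributes to a $j$-fold structure), and then recognize that summing $\ell$ and the $B_{n-\ell,k}$ contributions against the factor $c_\ell$ is exactly the recursive defining relation for partial Bell polynomials, namely $B_{n,j} = \sum_{\ell} \binom{?}{?} \ell!\,c_\ell\, B_{n-\ell,j-1}$ up to the standard combinatorial factors (see \cite[Section~3.3]{Comtet}), together with the base term $B_{n,1}(1!c_1,\dots) = n!\,c_n$ absorbing the $\ell=n$ term.

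The main obstacle I anticipate is bookkeeping: getting the factorials, the $(a\ell+b)$ prefactor from Lemma~\ref{lem:r-convolution-general}, and the $\ell!$ from the Bell polynomial recursion to cancel cleanly so that the combined sum collapses to $\sum_{j}\binom{an+bj}{j-1}\frac{(j-1)!}{n!}B_{n,j}(1!c_1,2!c_2,\dots)$. A cleaner alternative, which I would likely adopt to avoid invoking the explicit Bell recursion, is to argue in terms of formal power series or generating functions: since both $(y_n)$ and $(z_n)$ are now known to satisfy the same recurrence \eqref{eq:recurrence}--\eqref{eq:abRecurrence} with the same initial value $y_0=z_0=1$ (this is precisely what Theorem~\ref{thm:yn_recurrence} and Proposition~\ref{thm:zn_recurrence} assert), one could instead verify Proposition~\ref{thm:zn_recurrence} by checking that the formula \eqref{eq:abSequence} satisfies \eqref{eq:abRecurrence} through the generating function identity underlying Lemma~\ref{lem:r-convolution-general}. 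In either route, the key inputs are Lemma~\ref{lem:r-convolution-general} and the defining properties of $B_{n,k}$; no new ideas are needed beyond careful index manipulation, so I expect the proof to be short modulo that bookkeeping.
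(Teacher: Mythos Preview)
Your plan is exactly the paper's proof: isolate the $\ell=n$ term (which contributes $c_n$, matching the $k=1$ summand of $z_n$), apply Lemma~\ref{lem:r-convolution-general} with $r=a\ell+b$ to the remaining terms, observe that the binomial argument collapses to $an+b(k+1)-1$, shift $k\mapsto k-1$, and then use a Bell-polynomial recursion to finish. So the approach is correct and identical in outline.

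The one place where you are deliberately vague is the only place where the paper does any real work, and it is worth spelling out so you see why the ``bookkeeping'' closes. The prefactor $(a\ell+b)$ coming from Lemma~\ref{lem:r-convolution-general} does \emph{not} match a single standard Bell recursion; the paper instead (after swapping $\ell\leftrightarrow n-\ell$ and interchanging sums) rewrites
\[
\tfrac{n!}{\ell!}\,c_{n-\ell}\bigl(a(n-\ell)+b\bigr)
= an\tbinom{n-1}{\ell}(n-\ell)!\,c_{n-\ell} + b\tbinom{n}{\ell}(n-\ell)!\,c_{n-\ell},
\]
and then invokes two separate identities (Charalambides, eqns.~(11.11)--(11.12)):
\[
\sum_{\ell}\tbinom{n-1}{\ell}(n-\ell)!\,c_{n-\ell}\,B_{\ell,k-1}=B_{n,k},
\qquad
\sum_{\ell}\tbinom{n}{\ell}(n-\ell)!\,c_{n-\ell}\,B_{\ell,k-1}=k\,B_{n,k}.
\]
These together produce the factor $an+bk$, which is precisely what is needed to convert $\binom{an+bk-1}{k-2}(k-2)!$ into $\binom{an+bk}{k-1}(k-1)!$. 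That splitting is the whole trick; once you see it, the rest is indeed routine. Your proposed ``cleaner alternative'' via generating functions is too vague as stated and, in the form you wrote it, risks circularity.
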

\begin{proof}
By the previous lemma, omitting the argument of the Bell polynomials,
\begin{align*}
 \sum_{\ell=1}^{n-1} c_\ell\, z_{n-\ell}^{(a\ell+b)} 
 &= \sum_{\ell=1}^{n-1} c_\ell(a\ell+b) \sum_{k=1}^{n-\ell} \tbinom{an+b(k+1)-1}{k-1} \tfrac{(k-1)!}{(n-\ell)!} B_{n-\ell,k} \\
 &= \sum_{\ell=1}^{n-1} c_{n-\ell}\big(a(n-\ell)+b\big) 
 \sum_{k=1}^{\ell} \tbinom{an+b(k+1)-1}{k-1} \tfrac{(k-1)!}{\ell!} B_{\ell,k}\\
 &= \sum_{k=1}^{n-1} \tbinom{an+b(k+1)-1}{k-1} (k-1)! 
 \sum_{\ell=k}^{n-1} \tfrac{c_{n-\ell}(a(n-\ell)+b)}{\ell!} B_{\ell,k} 
\intertext{}
 &= \sum_{k=2}^{n} \tbinom{an+bk-1}{k-2} \tfrac{(k-2)!}{n!} 
 \sum_{\ell=k-1}^{n-1} \tfrac{n!}{\ell!}c_{n-\ell}(a(n-\ell)+b) B_{\ell,k-1} \\
 &= \sum_{k=2}^{n} \tbinom{an+bk-1}{k-2} \tfrac{(k-2)!}{n!} \sum_{\ell=k-1}^{n-1} 
 \Big(an\tbinom{n-1}{\ell} + b\tbinom{n}{\ell}\Big)(n-\ell)!c_{n-\ell} B_{\ell,k-1}.
\end{align*}

Now, using equations (11.11) and (11.12) in \cite[Theorem~11.12]{Charalambides}, one can easily verify the identities
\begin{align*}
 \sum_{\ell=k-1}^{n-1} an\tbinom{n-1}{\ell}(n-\ell)!c_{n-\ell} B_{\ell,k-1} &= anB_{n,k}, \\
 \sum_{\ell=k-1}^{n-1} b\tbinom{n}{\ell}(n-\ell)!c_{n-\ell} B_{\ell,k-1} &= bkB_{n,k},
\end{align*}
which imply
\begin{align*}
 \sum_{\ell=1}^{n-1} c_\ell\, z_{n-\ell}^{(a\ell+b)} 
 &= \sum_{k=2}^{n} \tbinom{an+bk-1}{k-2} \tfrac{(k-2)!}{n!} (an+bk)B_{n,k} 
   = \sum_{k=2}^{n} \tbinom{an+bk}{k-1} \tfrac{(k-1)!}{n!} B_{n,k}.
\end{align*}
Finally, by adding $c_n$ to each of these sums, we arrive at \eqref{eq:abRecurrence}.
\end{proof}

We now arrive at our main result.

\begin{theorem}\label{thm:Bell}
For $a, b\in\mathbb{N}_0$ with $a+b\not=0$ and $\mathbf{c}=(c_1,c_2,\dots)$, the sequence $y_n=\left|\mathfrak{D}^{\mathbf{c}}_n(a,b)\right|$ can be written as
\begin{equation}\label{eq:BellFormula}
 y_n = \sum_{k=1}^n \binom{an+bk}{k-1}\frac{(k-1)!}{n!} B_{n,k}(1!c_1, 2!c_2, \dots) \text{ for } n\ge 1.
\end{equation}
Moreover, the quantity $\binom{an+bk}{k-1}\frac{(k-1)!}{n!} B_{n,k}(1!c_1, 2!c_2, \dots)$ counts the number of Dyck paths in $\mathfrak{D}^{\mathbf{c}}_n(a,b)$ having exactly $k$ peaks.
\end{theorem}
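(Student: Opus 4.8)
The plan is to derive both assertions from the recurrences already established, proving the peak refinement and recovering \eqref{eq:BellFormula} as its specialization at a formal parameter set equal to $1$.

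The formula \eqref{eq:BellFormula} by itself is immediate. By Theorem~\ref{thm:yn_recurrence} the sequence $(y_n)$ is determined by $y_0=1$ and the recurrence \eqref{eq:recurrence}, whose right-hand side involves only $y_0,\dots,y_{n-1}$ (the summand with $\ell=n$ equals $c_n y_0^{an+b}=c_n$). By Proposition~\ref{thm:zn_recurrence}, the sequence $(z_n)$ of \eqref{eq:abSequence} satisfies $z_0=1$ and the very same recurrence \eqref{eq:abRecurrence}. Hence $y_n=z_n$ for all $n$ by induction on $n$, which is \eqref{eq:BellFormula}.

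For the peak count, let $y_{n,k}$ be the number of paths in $\mathfrak{D}^{\mathbf c}_n(a,b)$ with exactly $k$ peaks and put $Y_n(t)=\sum_k y_{n,k}t^k$, so $Y_0(t)=1$. The key observation is that the bijection $f$ of Theorem~\ref{thm:yn_recurrence} is peak-additive: in $w=u^{(a+b)\ell}d^{b(\ell-1)+1}D_1\,d\,D_2\,d\cdots d\,D_{a\ell+b}$ there is exactly one factor $ud$ inside the initial block, and no new factor $ud$ arises at a junction, since each $D_m$ is empty or begins with $u$ and is always immediately preceded by a $d$ (either the tail of the previous piece or a separating $d$); thus $\#\text{peaks}(w)=1+\sum_{m=1}^{a\ell+b}\#\text{peaks}(D_m)$. (Equivalently, in the decomposition of a colored Dyck word into the blocks $P_0,P_1,\dots$, each $P_j$ with $j\ge1$ contributes one peak and $P_0$ none, because consecutive blocks always meet as $\dots d\,u\dots$ or $\dots d\,d\dots$.) Running the tuple count of Theorem~\ref{thm:yn_recurrence} with this refinement --- the color $C$ now contributing a factor $t$ --- gives
\[
 Y_n(t)=\sum_{\ell=1}^{n}(tc_\ell)\,Y_{n-\ell}^{(a\ell+b)}(t),
 \qquad
 Y_m^{(r)}(t)\defeq\!\!\!\sum_{i_1+\cdots+i_r=m}\!\!\!Y_{i_1}(t)\cdots Y_{i_r}(t),
\]
with $Y_0^{(r)}(t)=1$.

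Finally I would match $Y_n(t)$ with the generating polynomial of the quantities in the theorem. Since $B_{n,k}(\lambda x_1,\lambda x_2,\dots)=\lambda^k B_{n,k}(x_1,x_2,\dots)$, replacing $\mathbf c$ by $t\mathbf c=(tc_1,tc_2,\dots)$ in \eqref{eq:abSequence} yields the sequence $\tilde z_n=\sum_{k}\binom{an+bk}{k-1}\tfrac{(k-1)!}{n!}t^k B_{n,k}(1!c_1,2!c_2,\dots)$; by Lemma~\ref{lem:r-convolution-general} and Proposition~\ref{thm:zn_recurrence}, which apply verbatim to the coefficient sequence $t\mathbf c$, this $\tilde z_n$ satisfies $\tilde z_0=1$ and $\tilde z_n=\sum_{\ell=1}^n(tc_\ell)\,\tilde z_{n-\ell}^{(a\ell+b)}$, exactly the recurrence and initial value of $(Y_n(t))$. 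Induction on $n$ gives $Y_n(t)=\tilde z_n$; reading off the coefficient of $t^k$ proves the peak statement, and $t=1$ recovers \eqref{eq:BellFormula}. The only step demanding genuine care is the peak-additivity of $f$ --- in particular checking that the empty sub-words $D_m$ and the separating letters $d$ neither create nor destroy a peak at a block boundary; after that, everything is the bookkeeping of Lemma~\ref{lem:r-convolution-general} and Proposition~\ref{thm:zn_recurrence} read with $tc_j$ in place of $c_j$.
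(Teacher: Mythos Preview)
Your argument is correct and follows the same strategy as the paper: the identity \eqref{eq:BellFormula} drops out of Theorem~\ref{thm:yn_recurrence} and Proposition~\ref{thm:zn_recurrence} by induction, and the peak refinement is obtained by grading both sides by total degree in the $c_i$'s.

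The only difference is one of economy. Where you introduce the marker $t$, re-run the bijection to verify peak-additivity, and then re-apply Lemma~\ref{lem:r-convolution-general} and Proposition~\ref{thm:zn_recurrence} to the sequence $t\mathbf{c}$, the paper simply treats the $c_i$ as formal indeterminates from the outset: since \eqref{eq:BellFormula} has already been established for arbitrary $\mathbf{c}$, it is a polynomial identity in the $c_i$'s, and one just equates homogeneous components of degree $k$. On the right this is the $k$-th summand (by the homogeneity of $B_{n,k}$), while on the left a monomial of degree $k$ corresponds to a choice of colors for $k$ maximal ascents, hence to a path with exactly $k$ peaks. Your parenthetical remark about the block decomposition into $P_0,P_1,\dots$ is precisely this observation, so the lengthier peak-additivity check of $f$ and the second pass through the lemmas, while correct, are not needed.
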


\begin{proof}
Equation~\eqref{eq:BellFormula} is a direct consequence of Theorem~\ref{thm:yn_recurrence} and Proposition~\ref{thm:zn_recurrence}.  The second assertion follows by considering both sides of \eqref{eq:BellFormula} as polynomials in the $c_i$'s and by equating the terms of degree $k$.  
Indeed, note that $B_{n,k}(1!c_1, 2!c_2, \dots)$ contains as many monomials as there are partitions of $n$ into $k$ parts, and each such monomial has degree $k$ in the $c_i$'s.  On the other hand, each appearance of a $c_i$ in a monomial of $y_n$ corresponds to a coloring of a maximal ascent substring and therefore to a peak.   
\end{proof}

%%%%%%%%%%%%%%%%%%%%%%%%%%%%%%%%%%%%%%%%%%%%
\section{Examples}
\label{sec:examples}

In this section we proceed to illustrate the use and versatility of the representation \eqref{eq:BellFormula}. The goal is to take advantage of the partial Bell polynomials to derive combinatorial formulas for the given enumerating sequence. 

First of all, as we mentioned in the introduction, $\mathfrak{D}^{\mathbbm{1}}_n(1,0)$ is nothing but the set of Dyck paths of semilength $n$. Recall that we are using the symbol $\mathbbm{1}$ to denote the sequence of ones $\mathbf{c}=(1,1,\dots)$.

\begin{example}(Narayana numbers) By \cite[Sec.~3.3, eqn.~(3h)]{Comtet} for example, 
\[ B_{n,k}(1!,2!,3!,\dots)=\frac{n!}{k!}\binom{n-1}{k-1} = \frac{(n-1)!}{(k-1)!}\binom{n}{k} \text{ for } n,k\ge 1, \] 
so Theorem~\ref{thm:Bell} gives the known fact that the number of Dyck paths of semilength $n$ with exactly $k$ peaks is given by
\begin{equation*}  
 \binom{n}{k-1}\frac{(k-1)!}{n!} B_{n,k}(1!, 2!, \dots) = \frac{1}{n} \binom{n}{k-1}\binom{n}{k},
\end{equation*}
the Narayana number $N(n,k)$.

In general, for any given parameters $a$ and $b$, and coloring sequence $\mathbf{c}$, the expressions 
\[  N^{\mathbf{c}}_{a,b}(n,k) = \binom{an+bk}{k-1}\frac{(k-1)!}{n!} B_{n,k}(1!c_1, 2!c_2, \dots) \] 
provide the appropriate analog of the Narayana numbers.
\end{example}

\begin{example}(Colored Motzkin paths)
It is known that the number of Motzkin paths of length $n$ is the same as the number of Dyck words of semilength $n$ that avoid $uuu$ (via the bijection $u^2d\to u$,  $d\to d$, and $ud\to h$, where $h$ denotes a horizontal step (1,0)). Thus, for $n\ge 1$, the number of Motzkin $n$-paths whose horizontal steps admit $c_1$ colors and whose up steps admit $c_2$ colors is given by 
\begin{align*}
 y_n &= \sum_{k=1}^n \binom{n}{k-1}\frac{(k-1)!}{n!} B_{n,k}(1!c_1, 2!c_2,0,\dots) \\
 &= \sum_{k=\lceil \frac{n}{2}\rceil}^n \binom{n}{k-1}\frac{(k-1)!}{n!} \frac{n!}{k!} \binom{k}{n-k} c_1^{2k-n}c_2^{n-k} \\
 &= \sum_{k=\lceil \frac{n}{2}\rceil}^n \frac{1}{n+1} \binom{n+1}{k} \binom{k}{n-k} c_1^{2k-n}c_2^{n-k} \\
 &= \sum_{k=0}^{\lfloor \frac{n}{2} \rfloor} \frac{1}{n+1} \binom{n+1}{n-k} \binom{n-k}{k} c_1^{n-2k}c_2^k \\
 &= \sum_{k=0}^{\lfloor \frac{n}{2} \rfloor} \binom{n}{2k} C_k\,  c_1^{n-2k}c_2^k,
\end{align*}
where $C_k$ denotes the Catalan number $\frac{1}{k+1}\binom{2k}{k}$.  Letting $c_1=c_2=1$ gives one of the better-known expressions for the Motzkin numbers.
\end{example}

\begin{example}(Schr\"oder numbers)
The numbers in the sequence \cite[A001003]{Sloane} are called little Schr\"oder numbers and are known to count (among other things) Dyck paths in which the interior vertices of the ascents admit two colors, that is, Dyck paths in which a maximal $j$-ascent may be colored in $2^{j-1}$ different ways. The number $y_n$ of such colored paths of semilength $n$ can be obtained from \eqref{eq:BellFormula} with $a=1$, $b=0$, and $\mathbf{c} = (1,2,2^2,\dots)$. Thus
\begin{align*}
 y_n &= \sum_{k=1}^n \binom{n}{k-1}\frac{(k-1)!}{n!} B_{n,k}(1!\cdot 1,\ 2!\cdot 2,\ 3!\cdot 2^2, \dots) \\
 &= \sum_{k=1}^n \binom{n}{k-1}\frac{(k-1)!}{n!} 2^{n-k} B_{n,k}(1!, 2!, \dots)\\
& = \sum_{k=1}^n \frac{1}{n} \binom{n}{k-1} \binom{n}{k} 2^{n-k} = \sum_{k=1}^n N(n,k)\, 2^{n-k}.
\end{align*}
\end{example}

\begin{example}($m$-ary paths)
For $m\in\mathbb{N}$ we consider the set $\mathfrak{D}^{\mathbbm{1}}_n(m,0)$ of Dyck words of semilength $mn$ created from strings of the form $P_0=d$ and $P_j=u^{mj}d$ for $j=1,\ldots,n$. 

The elements of $\mathfrak{D}^{\mathbbm{1}}_n(m,0)$ are in one-to-one correspondence with the elements of the set $\mathfrak{L}_n(m)$ of $m$-ary paths of length $(m+1)n$, i.e., lattice paths in the first quadrant from $(0,0)$ to $((m+1)n,0)$ with steps $(1,m)$ or $(1,-1)$. Here is an example for $m=2$:

\medskip
\begin{center}
\begin{tikzpicture}[scale=0.3]
\begin{scope}
\draw [step=1,thin,gray!40] (0,0) grid (20,7);
\draw [very thick] (0,0) -- (4,4) -- (7,1) -- (11,5) -- (12,4) -- (14,6) -- (20,0);
\node[below=1pt] at (10,0) {\small $D \in \mathfrak{D}^{\mathbbm{1}}_5(2,0)$};
\end{scope}

\node at (23,3) {$\longleftrightarrow$};

\begin{scope}[xshift=740]
\draw [step=1,thin,gray!40] (0,0) grid (15,7);
\draw [very thick] (0,0) -- (2,4) -- (5,1) -- (7,5) -- (8,4) -- (9,6) -- (15,0);
\node[below=1pt] at (7.5,0) {\small $D' \in \mathfrak{L}_5(2)$};
\end{scope}
\end{tikzpicture}
\end{center}

By equation \eqref{eq:BellFormula}, the  sequence $y_n = \left|\mathfrak{L}_n(m)\right| = \left|\mathfrak{D}^{\mathbbm{1}}_n(m,0)\right|$ is given by 
\begin{align*}
 y_n &= \sum_{k=1}^n \binom{mn}{k-1}\frac{(k-1)!}{n!} B_{n,k}(1!, 2!, \dots) \\
 &= \sum_{k=1}^n \frac{1}{k} \binom{mn}{k-1} \binom{n-1}{k-1} = \sum_{k=1}^n \frac{1}{mn+1} \binom{mn+1}{k}\binom{n-1}{n-k}, 
\end{align*}
which by Vandermonde's identity becomes
\begin{equation*}
 y_n = \frac{1}{mn+1} \binom{(m+1)n}{n}.
\end{equation*}
Moreover, the number of such paths with exactly $k$ peaks is given by the expression
\[  N^{\mathbbm{1}}_{m,0}(n,k) 
   = \frac{1}{k} \binom{mn}{k-1} \binom{n-1}{k-1} = \frac{1}{n} \binom{mn}{k-1} \binom{n}{k}. \]
These formulas are consistent with \cite[Corollary~4.12]{DNS15}. Clearly, Theorem~\ref{thm:Bell} also provides formulas for other choices of the coloring sequence $\mathbf{c}$. 
\end{example}

The next three examples illustrate simple connections with other types of lattice paths.
\begin{example}(\cite[A052709]{Sloane})
If $a=0$, $b=2$, and $\mathbf{c}=(1,1,0,0,\dots)$, the set $\mathfrak{D}^{\mathbf{c}}_n(0,2)$ consists of Dyck words of semilength $2n$ created from strings of the form $P_0=d$, $P_1=u^2d$, and $P_2=u^4d^3$. With the simple map $d\to (1,-1)$, $u^2d\to (1,1)$, and $u^4d^3\to (3,1)$, we get a one-to-one correspondence between $\mathfrak{D}^{\mathbf{c}}_n(0,2)$ and the set $\mathfrak{L}_n(0,2)$ of lattice paths in the first quadrant from $(0,0)$ to $(2n,0)$ with steps $(1,1)$, $(1,-1)$, or $(3,1)$. 

\medskip
\begin{center}
\begin{tikzpicture}[scale=0.35]
\begin{scope}
\draw [step=1,thin,gray!40] (0,0) grid (20,6);
\draw[blue] [very thick] (0,0) -- (4,4) -- (7,1);
\draw[cyan] [very thick] (7,1) -- (9,3) -- (10,2);
\draw[black] [very thick] (10,2) -- (11,1);
\draw[blue] [very thick] (11,1) -- (15,5) -- (18,2);
\draw[black] [very thick] (18,2) -- (20,0);
\node[below=1pt] at (10,0) {\small $D \in \mathfrak{D}^{\mathbbm{1}}_5(0,2)$};
\end{scope}

\node at (22.5,3) {$\longleftrightarrow$};

\begin{scope}[xshift=710,yshift=45]
\draw [step=1,thin,gray!40] (0,0) grid (10,3);
\draw[blue] [very thick] (0,0) -- (3,1);
\draw[cyan] [very thick] (3,1) -- (4,2);
\draw[black] [very thick] (4,2) -- (5,1);
\draw[blue] [very thick] (5,1) -- (8,2);
\draw[black] [very thick] (8,2) -- (10,0);
\node[below=1pt] at (5,0) {\small $D' \in \mathfrak{L}_5(0,2)$};
\end{scope}
\end{tikzpicture}
\end{center}

By means of \eqref{eq:BellFormula}, we then get that $y_n = |\mathfrak{L}_n(0,2)| = \left|\mathfrak{D}^{\mathbf{c}}_n(0,2)\right|$ satisfies
\begin{equation*}
 y_n = \sum_{k=1}^n \binom{2k}{k-1}\frac{(k-1)!}{n!} B_{n,k}(1!, 2!,0,\dots) 
 = \sum_{k=\lceil \frac{n}{2}\rceil}^n \frac{1}{k} \binom{2k}{k-1} \binom{k}{n-k}.
\end{equation*}
\end{example}

\begin{example}(\cite[A186997]{Sloane})
If $a=1$, $b=2$, and $\mathbf{c}=(1,1,0,0,\dots)$, the set $\mathfrak{D}^{\mathbf{c}}_n(1,2)$ consists of Dyck words of semilength $3n$ created from strings of the form $P_0=d$, $P_1=u^3d$, and $P_2=u^6d^3$. With the simple map $d\to (1,-1)$, $u^3d\to (1,2)$, and $u^6d^3\to (3,3)$, we get a one-to-one correspondence between $\mathfrak{D}^{\mathbf{c}}_n(1,2)$ and the set $\mathfrak{L}_n(1,2)$ of lattice paths in the first quadrant from $(0,0)$ to $(3n,0)$ with steps $(1,2)$, $(1,-1)$, or $(3,3)$.

\medskip
\begin{center}
\begin{tikzpicture}[scale=0.35]
\begin{scope}
\draw [step=1,thin,gray!40] (0,0) grid (18,8);
\draw[blue] [very thick] (0,0) -- (3,3) -- (4,2);
\draw[black] [very thick] (4,2) -- (5,1);
\draw[cyan] [very thick] (5,1) -- (11,7) -- (14,4);
\draw[black] [very thick] (14,4) -- (18,0);
\node[below=1pt] at (9,0) {\small $D \in \mathfrak{D}^{\mathbbm{1}}_3(1,2)$};
\end{scope}

\node at (21,3) {$\longleftrightarrow$};

\begin{scope}[xshift=685,yshift=20]
\draw [step=1,thin,gray!40] (0,0) grid (9,5);
\draw[blue] [very thick] (0,0) -- (1,2);
\draw[black] [very thick] (1,2) -- (2,1);
\draw[cyan] [very thick] (2,1) -- (5,4);
\draw[black] [very thick] (5,4) -- (9,0);
\node[below=1pt] at (5,0) {\small $D' \in \mathfrak{L}_3(1,2)$};
\end{scope}
\end{tikzpicture}
\end{center}

Again, by means of \eqref{eq:BellFormula}, we get that $y_n = |\mathfrak{L}_n(1,2)| = \left|\mathfrak{D}^{\mathbf{c}}_n(1,2)\right|$ satisfies
\begin{equation*}
 y_n = \sum_{k=1}^n \binom{n+2k}{k-1}\frac{(k-1)!}{n!} B_{n,k}(1!, 2!,0,\dots) 
 = \sum_{k=\lceil \frac{n}{2}\rceil}^n \frac{1}{k} \binom{n+2k}{k-1} \binom{k}{n-k}.
\end{equation*}
\end{example}

\begin{example}($\frac32$-Dyck paths)
In the context of generalized Dyck languages with only two letters, Duchon \cite{Duchon} studied rational Dyck paths and suggests the need for colored Dyck words. In particular, he considered the set of Dyck words with slope $\frac32$ and length $5n$, which can be visualized as generalized Dyck paths starting at $(0,0)$ and ending at $(2n,3n)$, without crossing the line $y=\frac32 x$. For example, for $n=2$,

\medskip
\begin{center}
\begin{tikzpicture}[scale=0.4]
\node at (-8,3) {\large $ababbaabbb$};
\node at (-3,3) {$\longleftrightarrow$};
\draw [step=1,thin,gray!40] (0,0) grid (4,6);
\draw [gray!60, thick] (0,0) -- (4,6);
\draw [very thick] (0,0) -- (1,0) -- (1,1) -- (2,1) -- (2,3) -- (4,3) -- (4,6);
\end{tikzpicture}
\end{center}

We denote this set by $\mathcal{D}_{3/2}(5n)$. In op.\ cit.\ Duchon proved that the number of factor-free elements of $\mathcal{D}_{3/2}(5n)$ is given by $C_{n-1} + C_n$, where $C_n$ is the $n$-th Catalan number.\footnote{A word in a language $L$ is said to be factor-free  if it has no proper factor in $L$.} Moreover, for $d_n=\left|\mathcal{D}_{3/2}(5n)\right|$, he gives the formula
\begin{equation*}
 d_n = \sum_{j=0}^n \frac{1}{5n+j+1} \binom{5n+1}{n-j} \binom{5n+2j}{j}.
\end{equation*}
This is sequence A060941 in \cite{Sloane}.

It turns out that these numbers may also be generated by counting the elements of $\mathfrak{D}^{\mathbf{c}}_n(5,0)$ with coloring sequence $\mathbf{c}=(C_{j-1}+C_j)_{j\ge 1}$. In other words, there is a bijection between $\mathcal{D}_{3/2}(5n)$ and the set of Dyck words of semilength $5n$ created from strings of the form $P_0=``d\,"$ and $P_j=``u^{5j}d"$ for $j=1,\ldots,n$, such that each maximal ascent $u^{5j}$ is colored by a factor-free Dyck word with slope $\frac32$ and length $5j$. 

Consequently, since $d_n = y_n = \left|\mathfrak{D}^{\mathbf{c}}_n(5,0)\right|$, Theorem~\ref{thm:Bell} gives the alternative formula
\begin{equation*}
  d_n  = \sum_{k=1}^n \binom{5n}{k-1}\frac{(k-1)!}{n!} B_{n,k}(1!(C_0+C_1), 2!(C_1+C_2),\dots).
\end{equation*}
Finally, since $j!(C_{j-1}+C_j) = (2j-2)_{j-1} + (2j)_{j-1}$, we can use the second identity in \cite[Example~3.2]{WW09} with $a=2$, $b=-1$, and $c=2$ to obtain
\begin{align*}
  d_n  &= \sum_{k=1}^n \binom{5n}{k-1} 
  \sum_{j=0}^k \frac{(-1)^{k-j}}{k} \binom{k}{j} (2j-k) \frac{(2j-k+2n-1)_{n-1}}{n!} \\
  &= \sum_{k=1}^n \binom{5n}{k-1} \sum_{j=0}^k \frac{(-1)^{k-j}}{nk} \binom{k}{j} (2j-k)\binom{2j-k+2n-1}{n-1} \\
  &= \sum_{k=1}^n \binom{5n}{k-1} \sum_{j=0}^k \frac{(-1)^{j}}{n} \left[\binom{k-1}{j} - \binom{k-1}{j-1}\right]
  \binom{2n+k-2j-1}{n-1}.
\end{align*}
\end{example}

%%%%%%%%%%%%%%%%%%%%%%%%%%%%%%%%%%%%%%%%%%%%

\end{document}